\documentclass[reqno,11pt]{amsart}
\usepackage{a4wide,color,eucal,enumerate,mathrsfs}
\usepackage[normalem]{ulem}
\usepackage{amsmath,amssymb,epsfig,amsthm} 
\usepackage[latin1]{inputenc}
\usepackage{graphicx}
\usepackage{psfrag}
\numberwithin{equation}{section}

\newtheorem{theorem}{Theorem}[section]

\newtheorem{corollary}[theorem]{Corollary}
\newtheorem{lemma}[theorem]{Lemma}

\newtheorem{question}[theorem]{Question}

\theoremstyle{definition}

\theoremstyle{remark}

\newcommand{\R}{\mathbb{R}}


\def\rr{{\mathbb R}}
\def\rn{{{\rr}^n}}

\def\fz{\infty}

\def\boz{{\Omega}}

\def\bint{{\ifinner\rlap{\bf\kern.35em--}
\int\else\rlap{\bf\kern.45em--}\int\fi}\ignorespaces}

\def\bbint{{\ifinner\rlap{\bf\kern.35em--}
\hspace{0.078cm}\int\else\rlap{\bf\kern.45em--}\int\fi}\ignorespaces}

\def\r{\right}
\def\lf{\left}

\def\bint{{\ifinner\rlap{\bf\kern.35em--}
\int\else\rlap{\bf\kern.45em--}\int\fi}\ignorespaces}


\begin{document}

\title[Bi-Lipschitz invariance of planar $BV$- and $W^{1,1}$-extension domains]{Bi-Lipschitz invariance of\\ planar $BV$- and $W^{1,1}$-extension domains}

\author{Miguel Garc\'ia-Bravo}
\author{Tapio Rajala}
\author{Zheng Zhu}

\address{University of Jyvaskyla \\
         Department of Mathematics and Statistics \\
         P.O. Box 35 (MaD) \\
         FI-40014 University of Jyvaskyla \\
         Finland}
         
\email{miguel.m.garcia-bravo@jyu.fi}         
\email{tapio.m.rajala@jyu.fi}
\email{zheng.z.zhu@jyu.fi}

\thanks{The first two authors acknowledge the support from the Academy of Finland, grant no.~314789. The third author was supported by the Academy of Finland via the Centre of Excellence in Analysis and Dynamics Research, grant no.~323960.}
\subjclass[2000]{Primary 46E35.}
\keywords{Sobolev extension, BV-extension}
\date{\today}


\begin{abstract}
We prove that a bi-Lipschitz image of a planar $BV$-extension domain is also a $BV$-extension domain, and that a bi-Lipschitz image of a planar $W^{1,1}$-extension domain is again a $W^{1,1}$-extension domain.
\end{abstract}


\maketitle


\section{Introduction}

Let $\boz\subset\rr^n$ be a domain. For $1\leq p\leq\fz$, we define the Sobolev space $W^{1, p}(\boz)$ by setting 
\[W^{1, p}:=\{u\in L^p(\boz)\,:\, \nabla u\in L^p(\boz;\rn)\},\]
where $\nabla u$ means the weak (distributional) derivative of $u$. The Sobolev space $W^{1, p}(\boz)$ is equipped with the norm 
\[\|u\|_{W^{1, p}(\boz)}:=\|u\|_{L^p(\boz)}+\||\nabla u|\|_{L^p(\boz)}.\]
We say that $\boz$ is a $W^{1, p}$-extension domain if there exists a bounded extension operator $T\colon W^{1, p}(\boz)\to W^{1, p}(\rn)$, meaning that for every $u \in W^{1,p}(\boz)$ we have $T(u)\big|_\boz\equiv u$ and 
\[\|T(u)\|_{W^{1, p}(\rn)}\leq C\|u\|_{W^{1, p}(\boz)},\]
where the constant $C$ is independent of $u$. The minimal possible constant $C$ above is denoted by $\|T\|$. Based on results in \cite{HKT:JFA, HKT:Rev}, for $1<p\leq\fz$, whenever $\boz$ is a $W^{1,p}$-extension domain, we can construct a bounded linear extension operator $T\colon W^{1, p}(\boz)\to W^{1, p}(\rn)$. By \cite{KRZ2}, for planar bounded simply connected $W^{1, 1}$-extension domains $\boz$, we can also construct a bounded linear extension operator $T\colon W^{1, 1}(\boz)\to W^{1, 1}(\rr^2)$. The classical results due to Calder\'on and Stein \cite{calderon, stein} tell us that Lipschitz domains are $W^{1, p}$-extension domains, for every $1\leq p\leq\fz$. Later, Jones \cite{Jones} defined a class of so-called $(\epsilon, \delta)$-domains which are a generalization of Lipschitz domains. He also proved that these domains are $W^{1, p}$-extension domains for every $1\leq p\leq\fz$. Moreover, in the works \cite{KRZ1, KRZ2, shvar:JFA}, a geometric characterization of planar bounded simply connected $W^{1, p}$-extension domains was established.

For arbitrary $u\in W^{1, p}(\rn)$ with $1\leq p\leq\fz$, the inequality
\[|u(x)-u(y)|\leq |x-y|\lf(CM[|\nabla u|](x)+CM[|\nabla u|](y)\r)\]
holds on every Lebesgue point of $u$, where the constant $C$ is independent of $u$ and $M[|\nabla u|]$ denotes the Hardy-Littlewood maximal function of $|\nabla u|$. Motivated by this estimate, Haj\l{}asz defined the so-called Haj\l{}asz-Sobolev space $M^{1, p}(\boz)$ which consists of all functions $u\in L^p(\boz)$ such that there exists a function $0\leq g\in L^p(\boz)$ satisfying the inequality
\begin{equation}\label{eq:Hajlasz}
|u(x)-u(y)|\leq |x-y|(g(x)+g(y))
\end{equation}
for every $x, y\in\boz\setminus F$ where the exceptional set $F$ satisfies $|F|=0$. We use $\mathcal D_p(u)$ to denote the class of all nonnegative functions $g\in L^p(\boz)$ which satisfy the inequality \eqref{eq:Hajlasz}. The Haj\l{}asz-Sobolev space $M^{1, p}(\boz)$ is then equipped with the norm 
\[\|u\|_{M^{1, p}(\boz)}:=\|u\|_{L^p(\boz)}+\inf_{g\in\mathcal D_p(u)}\|g\|_{L^p(\boz)}.\]
For $1\leq p\leq\fz$, one always has $M^{1, p}(\boz)\subset W^{1, p}(\boz)$ and the inclusion is strict for $p=1$, see \cite{ks}.
By \cite{hajlasz}, for $1<p\leq\fz$, the equality $M^{1, p}(\boz)=W^{1, p}(\boz)$ holds for a bounded $W^{1, p}$-extension domain $\boz$. Similarly to $W^{1, p}$-extension domains, a domain $\boz\subset\rn$ is said to be an $M^{1, p}$-extension domain, if there exists a bounded extension operator $T\colon M^{1, p}(\boz)\to M^{1, p}(\rn)$ (the existence of such an operator implies the existence of a linear one for the cases $1\leq p<\infty $ by \cite{HKT:Rev}). Observe that since $M^{1, \fz}(\boz)$ consists of Lipschitz functions, by Kirszbraun theorem, every domain is a $M^{1, \fz}$-extension domain. Furthermore, in \cite{HKT:Rev},  it is proved that for $1\leq p<\fz$ we have that $\boz$ is an $M^{1,p}$-extension domain if and only if $\boz$ is Ahlfors $n$-regular. We say that a domain $\boz\subset\rn$ is Ahlfors $n$-regular, if for every $x\in\boz$ and $0<r<1$, we have 
\[|B(x, r)\cap\boz|\geq c|B(x, r)|\]
with a constant $0<c<1$ independent of $x$ and $r$. Combining these two results, one can prove that for $1<p<\fz$, a domain $\boz\subset\rn$ is a $W^{1, p}$-extension domain if and only if $W^{1, p}(\boz)=M^{1, p}(\boz)$ and $\boz$ is Ahlfors $n$-regular (see \cite[Theorem 5]{HKT:JFA}). 

In the case that $\boz$ and $\boz'$ are bi-Lipschitz equivalent, one can easily check that $M^{1, p}(\boz)=M^{1, p}(\boz')$ isomorphically, and also the fact that $\boz$ is Ahlfors $n$-regular if and only if $\boz'$ is Ahlfors $n$-regular. Moreover, a domain is a $W^{1, \fz}$-extension domain if and only if it is locally quasiconvex and local quasiconvexity is bi-Lipschitz invariant. We say that a set $E\subset\rr^n$ is quasiconvex, if there exists a  constant $c\ge 1$ such that for every $x, y\in E$, there exists a rectifiable curve $\gamma\subset E$ connecting $x$ and $y$ with the length controlled from above by $c|x-y|$. In such case $E$ is called $c$-quasiconvex. These last observations lead to the following theorem, presented by Haj\l{}asz, Koskela and Tuominen in \cite{HKT:JFA}.
\begin{theorem}\label{thm:HKT}
If $\boz$ and $\boz'$ are bi-Lipschitz equivalent, for $1<p\leq\fz$, $\boz$ is a $W^{1, p}$-extension domain if and only if $\boz'$ is a $W^{1 ,p}$-extension domain. \end{theorem}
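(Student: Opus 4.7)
The plan is to split the theorem into the two cases $1<p<\fz$ and $p=\fz$, and in each case reduce the claim to the bi-Lipschitz invariance of an intrinsic characterization already recalled in the introduction.

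For $1<p<\fz$ I would invoke \cite[Theorem 5]{HKT:JFA}: $\boz$ is a $W^{1,p}$-extension domain if and only if $\boz$ is Ahlfors $n$-regular and $W^{1,p}(\boz)=M^{1,p}(\boz)$ with equivalent norms. Both conditions are bi-Lipschitz invariant. Let $f\colon\boz\to\boz'$ be an $L$-bi-Lipschitz homeomorphism. Ahlfors $n$-regularity transfers directly from the definition, since $f$ distorts distances and Lebesgue measure by factors controlled by $L$, so a lower mass bound for $\boz$ produces one for $\boz'$ with a constant depending only on $c$ and $L$. The identification $W^{1,p}(\boz)=M^{1,p}(\boz)$ transfers via pullback: the composition operator $v\mapsto v\circ f$ is an isomorphism $W^{1,p}(\boz')\to W^{1,p}(\boz)$ by the bi-Lipschitz change of variables together with the chain rule for Sobolev functions, and it is also an isomorphism $M^{1,p}(\boz')\to M^{1,p}(\boz)$ because a Haj\l{}asz inequality on $\boz'$ with gradient $g$ pulls back to a Haj\l{}asz inequality on $\boz$ with gradient $L(g\circ f)$, and symmetrically for $f^{-1}$. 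Consequently the equality of the two function spaces is inherited by $\boz'$, and combining the two invariances closes this case.

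For $p=\fz$ I would use the fact noted in the introduction that a domain is a $W^{1,\fz}$-extension domain if and only if it is locally quasiconvex, and that this property is immediately bi-Lipschitz invariant: the image of a rectifiable curve under an $L$-bi-Lipschitz map is again rectifiable with length distorted by at most $L$, so local $c$-quasiconvexity of $\boz$ yields local $cL^2$-quasiconvexity of $\boz'$. The only mildly delicate point in the entire argument is the bi-Lipschitz invariance of the Haj\l{}asz space $M^{1,p}$, but this is essentially bookkeeping once one unwinds the definitions; as the authors themselves indicate, no serious obstacle is expected.
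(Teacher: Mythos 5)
Your proposal is correct and follows essentially the same route the paper itself sketches in the introduction: for $1<p<\fz$ it reduces to the bi-Lipschitz invariance of Ahlfors $n$-regularity and of the identification $W^{1,p}(\boz)=M^{1,p}(\boz)$ via \cite[Theorem 5]{HKT:JFA}, and for $p=\fz$ to the bi-Lipschitz invariance of local quasiconvexity. The quantitative details you supply (the factor $L(g\circ f)$ for the pulled-back Haj\l{}asz gradient and the constant $cL^2$ for quasiconvexity) are exactly right.
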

Since $M^{1, 1}(\boz)$ is strictly included in $W^{1, 1}(\boz)$  for arbitrary domains $\boz$, the above method does not work for the case $p=1$. In the same paper, Haj\l{}asz, Koskela and Tuominen raised the question.
\begin{question}\label{equs:HKT}
Is Theorem \ref{thm:HKT} true for $p=1$?
\end{question}

A partial affirmative answer to this question was provided in \cite[Corollary 1.3]{KMS}. There, it was shown that Theorem \ref{thm:HKT} holds for bounded simply connected planar domains also in the case $p=1$, and also for $BV$-functions. In \cite{KMS} it was also conjectured that the hypothesis of simply connectivity was superfluous. In this paper we will show that they were right. We will extend these previous results and answer Question \ref{equs:HKT} positively for general bounded planar domains. We will do this by first resolving the question for $BV$-functions via decomposition of sets of finite perimeter into Jordan domains, and then employing a recent result from \cite{GBR2021} to pass to $W^{1,1}$-functions.
Both, the proof in \cite{KMS} and our proof, rely on the results of V\"ais\"al\"a \cite{V2008} and on the quasiconvexity of the connected open components of the complement of planar $BV$-extension domains. The difference is that in \cite{KMS} the bi-Lipschitz function was extended to the complement by using the fact that the bi-Lipschitz map can be extended to a small neighbourhood. Here we use a decomposition of sets of finite perimeter and the bi-Lipschitz invariance of the quasiconvexity of the holes, see Section \ref{sec:proof} for the definitions and results needed for this approach.


Recall that the space of functions of bounded variation $BV(\boz)$ is defined by setting 
\[BV(\boz):=\{u\in L^1(\boz)\ :\|Du\|(\boz)<\fz \}\]
where 
\[\|Du\|(\boz)=\sup\lf\{\int_\boz u {\rm div}(v)dx\,:\,v\in C_0^\fz(\boz;\rn), |v|\leq 1\r\}\]
means the total variation of $u$ on $\boz$. The function space $BV(\boz)$ is equipped with the norm 
\[\|u\|_{BV(\boz)}:=\|u\|_{L^1(\boz)}+\|Du\|(\boz).\]
Note that $\|Du\|$ is a Radon measure on $\boz$ that is defined for every set $F\subset\boz$ as 
\[\|Du\|(F):=\inf\{\|Du\|(U)\,:\, F\subset U, U\ {\rm is\ open}\}.\]

A domain $\boz\subset\rn$ is said to be a $BV$-extension domain, if there exists a bounded extension operator $T\colon BV(\boz)\to BV(\rn)$ with $T(u)\big|_\boz\equiv u$ and an absolute constant $C>0$ so that
\[\|T(u)\|_{BV(\rn)}\leq C\|u\|_{BV(\boz)}\]
for every $u\in BV(\boz)$. By a result in \cite{KMS}, a $W^{1, 1}$-extension domain is also a $BV$-extension domain. A typical example showing that the converse is not true is the slit disk in the plane. 

In Section \ref{sec:proof} we will prove the following result.

\begin{theorem}\label{thm:biLip}
Let $\Omega \subset \mathbb R^2$ be a bounded $BV$-extension domain and $f \colon \Omega \to \Omega'$ a bi-Lipschitz map. Then  $\Omega'$ is also a $BV$-extension domain.
\end{theorem}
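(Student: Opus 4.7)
The plan is to follow the \emph{pull back, extend, push forward} scheme. Given $u'\in BV(\Omega')$, set $u:= u'\circ f$; since $f$ is bi-Lipschitz, $u\in BV(\Omega)$ with $\|u\|_{BV(\Omega)}\lesssim \|u'\|_{BV(\Omega')}$. Applying the $BV$-extension operator of $\Omega$ produces $\bar u\in BV(\mathbb{R}^2)$ with $\bar u|_\Omega = u$ and controlled norm. If one can produce a planar bi-Lipschitz homeomorphism $F\colon\mathbb{R}^2\to\mathbb{R}^2$ with $F|_\Omega = f$, then $\overline{u'}:= \bar u\circ F^{-1}$ restricts to $u'$ on $\Omega'$, lies in $BV(\mathbb{R}^2)$ by bi-Lipschitz invariance of total variation in the plane, and satisfies the desired norm estimate. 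The proof therefore reduces to constructing such an $F$.

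The construction of $F$ is the core of the argument. The natural continuous extension $\bar f\colon\overline{\Omega}\to\overline{\Omega'}$ induces a bijection between the connected components of $\mathbb{R}^2\setminus\Omega$ and those of $\mathbb{R}^2\setminus\Omega'$. Because $\Omega$ is a bounded $BV$-extension domain in the plane, each complementary component is a quasiconvex Jordan domain with a uniformly controlled quasiconvexity constant; this geometric input is exactly what one obtains by feeding carefully chosen indicator functions of sets of finite perimeter in $\Omega$ through the extension operator and then applying the Ambrosio--Caselles--Masnou--Morel decomposition of planar sets of finite perimeter into Jordan components, which allows one to build short connecting curves inside each hole from pieces of the boundary of the decomposed extension. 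By the bi-Lipschitz invariance of quasiconvexity, the corresponding components of $\mathbb{R}^2\setminus\Omega'$ are also quasiconvex Jordan domains with comparable constants.

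For each pair of corresponding complementary components one then invokes V\"ais\"al\"a's bi-Lipschitz extension theorem \cite{V2008} to extend $\bar f$ across the component with a bi-Lipschitz constant depending only on the bi-Lipschitz constant of $f$ and on the uniform quasiconvexity constants. The unbounded complementary component is handled analogously after enclosing $\overline{\Omega}$ in a large disc and treating the bounded annular region between $\partial\Omega$'s outer boundary and the disc. Patching all these pieces with $f$ on $\Omega$ itself yields the desired planar bi-Lipschitz homeomorphism $F$, and the push-forward $\bar u\circ F^{-1}$ completes the extension of $u'$.

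The main obstacle I anticipate is the quantitative control needed in the gluing: $\Omega$ may have infinitely many complementary components, and one must verify that the quasiconvexity constants of the holes are genuinely uniform (depending only on the operator norm of the $BV$-extension operator on $\Omega$) and that the component-wise V\"ais\"al\"a extensions fit together into a single globally bi-Lipschitz map, with matching constants on common boundaries. The novelty compared to \cite{KMS} is precisely that this extension of $f$ to the complement is carried out intrinsically, using the planar geometry of the quasiconvex holes extracted via the decomposition of sets of finite perimeter, rather than via a local bi-Lipschitz extension in a small neighbourhood of $\partial\Omega$.
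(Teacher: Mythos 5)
There is a genuine gap, and it sits exactly at the step you yourself flag as ``the core of the argument'': the construction of a global bi-Lipschitz homeomorphism $F\colon\mathbb R^2\to\mathbb R^2$ with $F|_\Omega=f$. The result you invoke from \cite{V2008} (the one used in the paper is Theorem \ref{thm:Vaisala}, i.e.\ Corollary 4.11 of \cite{V2008}) is \emph{not} an extension theorem: it only says that the quasiconvexity of the holes of a continuum is preserved under a bi-Lipschitz map of the continuum itself. Knowing that a hole $\Omega_i$ and its partner $\Omega_i'$ are both uniformly quasiconvex does not produce a bi-Lipschitz homeomorphism $\Omega_i\to\Omega_i'$ agreeing with $f$ on $\partial\Omega_i$. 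That is a bi-Lipschitz Schoenflies-type problem; even in the model case where $\partial\Omega_i$ is a Jordan curve this requires a deep theorem of Tukia type with no obvious uniformity over infinitely many holes, and here the boundaries of the holes need not be Jordan curves at all (nor need the holes be Jordan domains, as you assert). On top of that, the unbounded component and the gluing of countably many piecewise extensions into a single globally bi-Lipschitz map are unresolved. So the proposal reduces the theorem to a statement that is strictly stronger than what is known and is precisely what the paper is designed to avoid: the authors note that \cite{KMS} could extend $f$ to a neighbourhood only in the simply connected case, and that the point of the new argument is to dispense with any extension of $f$.

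The paper's actual route keeps $f$ defined only on $\overline{\Omega}$ and works at the level of sets of finite perimeter. By Lemma \ref{lma:BV-Per} it suffices to extend sets $E'\subset\Omega'$ of finite perimeter. One pulls back to $E=f^{-1}(E')$, extends to $\widetilde E\subset\mathbb R^2$ using the extension property of $\Omega$, decomposes $\partial^M\widetilde E$ into rectifiable Jordan curves via Theorem \ref{thm:planardecomposition}, and then for each such curve $\gamma$ pushes forward only the part $\gamma\cap\overline{\Omega}$ by $f$, replacing each arc of $\gamma\setminus\overline{\Omega}$ by a short curve inside the corresponding quasiconvex hole of $\overline{\Omega'}$ (this is where Lemma \ref{lem:quasi.comp.domains} and Theorem \ref{thm:Vaisala} enter). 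The resulting closed curves bound the extension $\widetilde E'$, and the length estimate \eqref{eq:pieces} gives the perimeter bound. If you want to salvage your scheme, you would need to either prove the uniform bi-Lipschitz extension of $f$ across all holes (a substantial open-ended task) or switch to the set-of-finite-perimeter formulation, where only curves, not the map, need to be extended into the holes.
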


Let us show how Theorem \ref{thm:biLip} implies the same result for $W^{1,1}$-extension domains.
We use the recent characterization of $W^{1,1}$-extension domains among bounded $BV$-extension domains that was proven by the first and second named authors in \cite{GBR2021}.

\begin{theorem}\label{thm:planar}
Let $\Omega \subset \mathbb R^2$ be a bounded $BV$-extension domain. Then $\Omega$ is a $W^{1,1}$-extension domain if and only if
the set
\[
 \partial \Omega \setminus \bigcup_{i \in I} \overline{\Omega_i}
\]
is purely $1$-unrectifiable, where $\{\Omega_i\}_{i\in I}$ are the connected components of $\mathbb R^2 \setminus \overline{\Omega}$.
\end{theorem}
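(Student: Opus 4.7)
The plan is to prove both directions of the equivalence, exploiting the fact that $W^{1,1}$ functions have absolutely continuous weak derivatives while $BV$ functions may carry a singular part concentrated on $1$-rectifiable sets.

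For the necessity direction, I would argue by contradiction. Suppose $\Omega$ is a $W^{1,1}$-extension domain while $E := \partial\Omega\setminus\bigcup_{i\in I}\overline{\Omega_i}$ fails to be purely $1$-unrectifiable, so a positive $\mathcal{H}^1$-measure subset of $E$ lies on some Lipschitz curve $\gamma$. Since every point of $E$ is by definition inaccessible from the complement of $\overline{\Omega}$, a thin two-sided tubular neighborhood of $\gamma$ near $\gamma\cap E$ sits entirely inside $\Omega$. I would then construct a family $\{u_n\}\subset W^{1,1}(\Omega)$ with uniformly bounded norm, each realizing a sharp transition from $0$ to $1$ across $\gamma\cap E$ within that neighborhood. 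Any extension $Tu_n\in W^{1,1}(\mathbb{R}^2)$ would be forced to absorb these jumps in its weak gradient, but since $W^{1,1}$ functions cannot concentrate derivative on a $1$-rectifiable set, the norms $\|Tu_n\|_{W^{1,1}(\mathbb{R}^2)}$ would blow up, contradicting the boundedness of the extension operator.

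For the sufficiency direction, I would modify the given $BV$-extension operator $T_0\colon BV(\Omega)\to BV(\mathbb{R}^2)$ to produce a $W^{1,1}$-extension. Given $u\in W^{1,1}(\Omega)$, the obstruction to $T_0u$ being $W^{1,1}$ is the singular part of $D(T_0u)$, which is concentrated on a $1$-rectifiable jump set $J\subset\partial\Omega\cup\bigcup_i\partial\Omega_i$. Since $E$ is purely $1$-unrectifiable, $\mathcal{H}^1(J\cap E)=0$, so $J$ lies (up to an $\mathcal{H}^1$-null set) in $\bigcup_i\overline{\Omega_i}$. For each component $\Omega_i$, I would replace $T_0u|_{\Omega_i}$ by a $W^{1,1}$-extension across $\Omega_i$ of the appropriate boundary trace of $u$, using the quasiconvexity of $\Omega_i$ (a known property of the complementary components of planar $BV$-extension domains) together with Whitney- or Jones-type constructions. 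This kills the singular part on each $\overline{\Omega_i}$ while preserving $W^{1,1}$-norm bounds, and the assembled function is the sought $W^{1,1}$-extension of $u$.

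The main obstacle is the sufficiency direction: assembling the local modifications on each $\Omega_i$ into a single globally bounded extension operator. One must handle the traces of $u$ on the typically fractal sets $\partial\Omega\cap\partial\Omega_i$, quantify the cost of the $W^{1,1}$-extension inside each $\Omega_i$ in terms of the $W^{1,1}$-norm of $u$ on a neighborhood of $\Omega_i$, and sum these contributions over (possibly infinitely many) components with uniform constants. Controlling these estimates through a Whitney-type decomposition of each component, and ruling out singular concentrations on the purely unrectifiable remainder of $\partial\Omega$, is where most of the technical work would lie.
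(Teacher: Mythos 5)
This theorem is not proved in the paper you were given: it is quoted from \cite{GBR2021}, where the argument runs through an intermediate characterization of $W^{1,1}$-extension domains by a \emph{strong} extension property for sets of finite perimeter (perimeter extensions whose essential boundary meets $\partial \Omega$ in an $\mathcal H^1$-null set), combined with the decomposition of planar sets of finite perimeter into Jordan curves. Your proposal takes a different, more directly function-theoretic route, and in both directions it has genuine gaps.

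For necessity, the step that fails is the claim that a two-sided tubular neighbourhood of $\gamma$ near $\gamma \cap E$ lies inside $\Omega$. A point $x \in E = \partial\Omega \setminus \bigcup_i \overline{\Omega_i}$ only satisfies $x \notin \overline{\Omega_i}$ for each $i$ \emph{separately}; infinitely many small holes $\Omega_i$ may still accumulate at $x$, so no neighbourhood of $x$ need be contained in $\overline{\Omega}$, and the two ``sides'' of $\gamma$ need not even be separated within $\Omega$ (one may pass from one side to the other through gaps between the accumulating holes). Hence the family $u_n$ with a sharp transition across $\gamma \cap E$ cannot be built as described; one would need a density argument at $\mathcal H^1$-a.e.\ point of the rectifiable piece, and establishing that $\Omega$ has the required density on both sides is precisely where the $BV$-extension hypothesis has to enter. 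For sufficiency, the assertion that the singular part of $D(T_0 u)$ is concentrated on a $1$-rectifiable jump set is false in general: a $BV$ function may also carry a Cantor part, which can be concentrated on a purely $1$-unrectifiable set (this is possible on $E$ itself when $\mathcal H^1(\partial\Omega)$ fails to be $\sigma$-finite), so pure unrectifiability of $E$ does not by itself annihilate the singular measure there. Moreover, the trace matching on the fractal sets $\partial\Omega \cap \partial\Omega_i$ and the summation of the per-component estimates with uniform constants, which you defer as ``the main obstacle,'' constitute essentially the entire analytic content of the theorem; as sketched, the argument does not close.
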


Recall that a  set $H\subset \R^2$ is called  purely $1$-unrectifiable if for every Lipschitz map $f\colon \R\to\R^2$ we have $\mathcal{H}^1(H\cap f(\R))=0$.

Suppose that $\Omega \subset \mathbb R^2$ is a bounded $W^{1,1}$-extension domain. By Theorem \ref{thm:planar} the set
  \[
 H =  \partial \Omega \setminus \bigcup_{i \in I} \overline{\Omega_i}
 \]
 is purely $1$-unrectifiable and so is the image $H' = f(H)$ under a bi-Lipschitz map $f\colon \Omega\to\Omega'$ that is extended to the closures  $\overline{\Omega}$ and $\overline{\Omega}'$ as a bi-Lipschitz map. Hence, recalling that as a $W^{1,1}$-extension domain $\Omega$ is also a $BV$-extension domain, Theorem \ref{thm:biLip} implies that $\Omega' = f(\Omega)$ is also a $BV$-extension domain. Now, from Theorem \ref{thm:planar} we conclude that $\Omega'$ is a $W^{1,1}$-extension domain. We have then established the following.
 
\begin{corollary}\label{cor:W11}
Let $\Omega \subset \mathbb R^2$ be a bounded $W^{1,1}$-extension domain and $f \colon \Omega \to \Omega'$ a bi-Lipschitz map. Then $\Omega'$ is a $W^{1,1}$-extension domain.
\end{corollary}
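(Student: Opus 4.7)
The plan is to deduce the corollary by combining Theorem \ref{thm:biLip} with the characterisation in Theorem \ref{thm:planar}. Let $\Omega\subset\R^2$ be a bounded $W^{1,1}$-extension domain and $f\colon\Omega\to\Omega'$ a bi-Lipschitz map. Since any $W^{1,1}$-extension domain is a fortiori a $BV$-extension domain (by the result of \cite{KMS} recalled in the introduction), Theorem \ref{thm:biLip} immediately gives that $\Omega'$ is a $BV$-extension domain; what remains is to verify the purely $1$-unrectifiable condition of Theorem \ref{thm:planar} for $\Omega'$.

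First I would extend $f$ to a bi-Lipschitz homeomorphism $\bar f\colon\overline\Omega\to\overline{\Omega'}$, which is automatic because both $f$ and $f^{-1}$ are uniformly continuous; in particular $\bar f$ sends $\partial\Omega$ homeomorphically onto $\partial\Omega'$. Letting $\{\Omega_i\}_{i\in I}$ and $\{\Omega'_j\}_{j\in J}$ denote the connected components of $\R^2\setminus\overline\Omega$ and $\R^2\setminus\overline{\Omega'}$ respectively, set
\[
H:=\partial\Omega\setminus\bigcup_{i\in I}\overline{\Omega_i},\qquad H':=\partial\Omega'\setminus\bigcup_{j\in J}\overline{\Omega'_j}.
\]
Applying Theorem \ref{thm:planar} to $\Omega$ yields that $H$ is purely $1$-unrectifiable.

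The key step is to show $\bar f(H)=H'$. For this I would appeal to the planar bi-Lipschitz extension theorem of V\"ais\"al\"a \cite{V2008} and extend $\bar f$ to a bi-Lipschitz homeomorphism on some open neighbourhood $U$ of $\overline\Omega$. Such an extension, via its restriction to $U\setminus\overline\Omega$, produces a bijection between the complementary components $\Omega_i$ meeting $U$ and their counterparts $\Omega'_j$, and identifies the boundary limit points of each $\Omega_i$ on $\partial\Omega$ with the corresponding limit points on $\partial\Omega'$. From this it follows that $\bar f(H)=H'$, and since $\bar f$ is bi-Lipschitz the image of any Lipschitz curve is again Lipschitz, so $H'$ is purely $1$-unrectifiable. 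A final application of Theorem \ref{thm:planar}, now in the ``if'' direction to the $BV$-extension domain $\Omega'$, gives that $\Omega'$ is a $W^{1,1}$-extension domain.

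The main obstacle is the identification $\bar f(H)=H'$: the map $\bar f$ is a priori defined only on $\overline\Omega$, whereas the definition of $H'$ refers to complementary components of $\overline{\Omega'}$ that lie outside the range of $\bar f$. V\"ais\"al\"a's theorem provides precisely the extension beyond the boundary needed to match the two complementary structures; once this matching is in place, everything else reduces to the bi-Lipschitz invariance of pure $1$-unrectifiability.
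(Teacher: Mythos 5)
Your overall route is exactly the paper's: extend $f$ to a bi-Lipschitz homeomorphism $\bar f\colon\overline{\Omega}\to\overline{\Omega'}$, use the fact that a $W^{1,1}$-extension domain is a $BV$-extension domain together with Theorem \ref{thm:biLip} to get that $\Omega'$ is a $BV$-extension domain, and then transfer the purely $1$-unrectifiable condition of Theorem \ref{thm:planar} through $\bar f$. You are also right to isolate the identification $\bar f(H)=H'$ as the only point needing an argument (the paper asserts it without comment).

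The problem is the tool you propose for that step. V\"ais\"al\"a's paper \cite{V2008} concerns the behaviour of holes under maps defined on the set itself; it does not provide an extension of $\bar f$ to a bi-Lipschitz homeomorphism of a neighbourhood of $\overline{\Omega}$, and no such extension theorem holds for general compact planar sets. Indeed, the introduction of this paper explicitly contrasts its method with that of \cite{KMS}, where a neighbourhood extension was used and which is precisely why \cite{KMS} only treats simply connected domains; the point of the present approach is to avoid that extension. The identification you want follows instead from the hole correspondence already invoked in the proof of Theorem \ref{thm:biLip}: by V\"ais\"al\"a's results, for each component $\Omega'_j$ of $\R^2\setminus\overline{\Omega'}$ there is a component $\Omega_i$ of $\R^2\setminus\overline{\Omega}$ with $\bar f(\partial\Omega_i)=\partial\Omega'_j$, and this is a bijection between the families of holes. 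Since $\partial\Omega\cap\bigcup_i\overline{\Omega_i}=\bigcup_i\partial\Omega_i$ and likewise for $\Omega'$, the bijection gives $\bar f\bigl(\bigcup_i\partial\Omega_i\bigr)=\bigcup_j\partial\Omega'_j$, hence $\bar f(H)=H'$, with no extension of $f$ beyond $\overline{\Omega}$ required. With that substitution your argument closes and coincides with the paper's; the remaining observation, that a bi-Lipschitz image of a purely $1$-unrectifiable set is purely $1$-unrectifiable, is correct as you state it.
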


Let us remark that one could also prove Corollary \ref{cor:W11} with a similar proof as we provide for Theorem \ref{thm:biLip} in Section \ref{sec:proof}; via the invariance of quasiconvexity of the components of the complement, a characterization of $W^{1,1}$-extension domains as the domains with the strong extension property for sets of finite perimeter and by slightly pushing the boundary of the extension of a Jordan domain away from the boundary of $\partial \Omega$, see \cite{GBR2021} for more details on these tools. This alternative approach indicates that if we were able to prove the bi-Lipschitz invariance of $BV$-extension domains in higher dimensions, and were able to push the boundaries of sets of finite perimeter away from $\partial \Omega$ in a controlled manner, the bi-Lipschitz invariance of $W^{1,1}$-extension domains would follow. However, at the moment we are not able to complete such proof. 
An alternative approach for trying to solve the higher dimensional case could be to use the characterization of $W^{1,1}$-functions from \cite{hajlasz2}, similar to $M^{1,1}$.

\section{Proof of Theorem \ref{thm:biLip}}\label{sec:proof}

We will prove Theorem \ref{thm:biLip} by using the bi-Lipschitz invariance of the quasiconvexity of the connected components of $\mathbb R^2 \setminus \overline\Omega$, which are referred to as the \emph{holes} of $\overline{\Omega}$. 
We recall the following result of V\"ais\"al\"a \cite[Corollary 4.11]{V2008}.
\begin{theorem}\label{thm:Vaisala}
Let $G \subset \mathbb R^2$ be a bounded continuum such that each hole of $G$ is $c$-quasiconvex and let $f \colon G \to \mathbb R^2$ be $L$-bilipschitz. Then each hole of  $G' = f(G)$ is
$c'$-quasiconvex with $c'(c, L)$. 
\end{theorem}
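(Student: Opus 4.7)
The proof strategy is to extend the map $f$ to a bi-Lipschitz homeomorphism $F \colon \R^2 \to \R^2$ whose bi-Lipschitz constant $L'$ depends only on $L$ and $c$, and then to exploit the standard bi-Lipschitz invariance of quasiconvexity. Once such an $F$ is available, each hole $H$ of $G$ is carried by $F$ onto a hole $H'$ of $G' = f(G)$, since a planar homeomorphism extending $f$ automatically permutes complementary components; then a curve of length at most $c|x-y|$ inside $H$ pushes forward to a curve in $H'$ of length at most $L'c|x-y|\le L'^{2} c\, |F(x)-F(y)|$, yielding quasiconvexity of $H'$ with constant $c' = L'^{2}c$.

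The first step is to set up the bijection between complementary components of $G$ and $G'$. Each hole $H$ of $G$ is a bounded simply connected open set, since every complementary component of a planar continuum is simply connected, and $\partial H \subset G$, so $f|_{\partial H}$ is defined and $L$-bi-Lipschitz. By Alexander duality (or by the Jordan--Sch\"onflies theorem applied hole by hole), $f(\partial H)$ bounds a unique hole $H'$ of $G'$, giving a bijection $H\leftrightarrow H'$. The unbounded complementary component is handled symmetrically after a one-point compactification at $\infty$, where it becomes a simply connected region whose quasiconvexity near $\infty$ is trivial because $G$ is bounded.

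The core of the argument is the interior extension: for each complementary component $H$, I would construct a bi-Lipschitz extension $F_H \colon \overline{H}\to \overline{H'}$ of $f|_{\partial H}$ with constant depending only on $L$ and $c$. My plan is a Whitney-decomposition scheme in the spirit of Jones' $(\epsilon,\delta)$-extension theory and the Tukia--V\"ais\"al\"a reflection constructions: decompose $H$ into Whitney squares, associate to each square a matched square in $H'$ by following a chain of Whitney squares whose total length is controlled by the $c$-quasiconvexity of $H$ together with the boundary bi-Lipschitz map, and interpolate across each square via the nearest boundary point (where the values are already prescribed by $f$). Gluing these maps across all complementary components and with $f$ itself on $G$ produces the global bi-Lipschitz self-homeomorphism $F$ of the plane.

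The main obstacle is that the Whitney-matching step requires a priori geometric control on $H'$ that is uncomfortably close to the very conclusion we seek. I would resolve this via a bootstrap: first control the geometry of $H'$ in a collar of $\partial H'$ purely from the boundary bi-Lipschitz data and the quasiconvexity of $H$ near $\partial H$, and then propagate this control inwards using the simple connectivity of $H'$ and the planarity of the situation (this is where the restriction to $\R^2$ is essential, since the clean classification of complementary components as simply connected breaks down in higher dimensions). V\"ais\"al\"a's argument in \cite{V2008} carries out precisely such a bootstrap within his framework of relative uniform domains, which provides a convenient language for tracking all constants quantitatively in terms of $L$ and $c$.
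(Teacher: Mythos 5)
This statement is not proved in the paper at all: it is quoted verbatim from V\"ais\"al\"a \cite{V2008} (Corollary 4.11 there), so there is no internal argument to compare against and your proposal has to stand on its own. It does not. The entire weight of your argument rests on the existence of a bi-Lipschitz homeomorphism $F\colon \mathbb{R}^2\to\mathbb{R}^2$ extending $f$ with constant depending only on $L$ and $c$, and that extension is never constructed. The surrounding reductions are fine (a global homeomorphism permutes complementary components, and pushing forward a $c$-quasiconvex curve costs a factor $L'^2$), but the extension statement you need is strictly harder than the theorem: quantitative bi-Lipschitz extension from a general compact connected planar set is a well-known difficult problem (it appears among the Heinonen--Semmes questions), settled only in special situations such as subsets of lines and circles (Tukia, MacManus) and certain Jordan curves. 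Your own text concedes the circularity --- the Whitney-matching step ``requires a priori geometric control on $H'$ that is uncomfortably close to the very conclusion we seek'' --- and then discharges it by asserting that V\"ais\"al\"a carries out exactly this bootstrap. He does not. The point of \cite{V2008} is precisely to control the holes of the image \emph{without} extending $f$ into them: the condition ``every hole of $G$ is $c$-quasiconvex'' is reformulated intrinsically as a joinability property of pairs of points of $G$ itself, where $f$ is actually defined, and that reformulated property is what transfers under a bi-Lipschitz map of $G$ alone. So the one step carrying all the difficulty is missing, and the reference you lean on to fill it does something genuinely different.

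There are also secondary problems with the hole-by-hole extension scheme even granting optimism about the main step. Quasiconvexity of a hole $H$ does not make $\partial H$ a Jordan curve or even locally connected, so ``Jordan--Sch\"onflies applied hole by hole'' is not available, and a homeomorphism $\partial H\to\partial H'$ need not extend to a homeomorphism $\overline{H}\to\overline{H'}$ at all in this generality. The correspondence $H\mapsto H'$ with $f(\partial H)=\partial H'$ also needs a proof rather than an appeal to Alexander duality (duality gives a bijection of components, not this boundary identity), and the unbounded component cannot literally be treated by one-point compactification, since bi-Lipschitz in the chordal metric is not bi-Lipschitz in the Euclidean metric. If you want a workable route, follow V\"ais\"al\"a's: prove that quasiconvexity of all holes of $G$ is equivalent to a uniform chord-joining condition for points of $G$ lying on a common hole boundary, observe that this condition is preserved by $L$-bi-Lipschitz maps of $G$, and then pass back from the boundary condition to quasiconvexity of the open holes of $G'$.
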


In order to use the Theorem \ref{thm:Vaisala} we need to observe that the holes of a bounded planar $BV$-extension domain are quasiconvex. This was established for simply connected domains in  \cite[Theorem 1.1]{KMS}, and the proof works with minor modifications in the more general case considered here, see \cite[Lemma 5.2]{GBR2021}.

\begin{lemma}\label{lem:quasi.comp.domains} 
Suppose that $\Omega \subset \mathbb R^2$ is a bounded $BV$-extension domain. Then there exists a constant $c>0$ so that 
each hole of $\overline\Omega$ is $c$-quasiconvex.
\end{lemma}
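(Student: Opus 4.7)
The plan is to argue by contradiction along the lines of \cite[Theorem 1.1]{KMS}, localizing everything around one hole at a time. Suppose the conclusion fails; then there exist a hole $H$ of $\overline\Omega$ and, for each $n \in \N$, points $a_n, b_n \in H$ with intrinsic (length) distance $d_H(a_n, b_n) > n|a_n - b_n|$. Write $r_n := |a_n - b_n|$. Since $[a_n, b_n]$ cannot lie inside $H$ (otherwise $d_H(a_n, b_n) \leq r_n$), the segment meets $\overline\Omega$; applying the Jordan curve theorem to a near-shortest curve in $\overline H$ from $a_n$ to $b_n$, I plan to extract a \emph{bottleneck configuration}: a point $z_n \in \partial H \subset \partial \Omega$, a scale $\rho_n \asymp r_n$, and two distinct connected components $E_n^+, E_n^-$ of $\Omega \cap B(z_n, \rho_n)$ sitting on opposite sides of a narrow strip of $\partial \Omega$ that separates the $a_n$-side from the $b_n$-side in $H$.

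Next, I would test the $BV$-extension on $u_n := \chi_{E_n^+} \in BV(\Omega)$, whose norm is bounded by
\[
\|u_n\|_{BV(\Omega)} = |E_n^+| + P(E_n^+, \Omega) \lesssim \rho_n^2 + \rho_n \lesssim \rho_n.
\]
Applying the bounded extension operator $T$ and the coarea formula produces, for some $t_n \in (0,1)$, a set $F_n := \{Tu_n > t_n\}$ of finite perimeter in $\R^2$ with $F_n \cap \Omega = E_n^+$ up to null sets and $P(F_n, \R^2) \leq \|T\|\,\|u_n\|_{BV(\Omega)} \lesssim \|T\|\rho_n$. The $BV$-trace of $Tu_n$ on $\partial\Omega$ from inside $\Omega$ equals $1$ on $\partial E_n^+ \cap \partial\Omega$ and $0$ on $\partial E_n^- \cap \partial\Omega$; for a generic $t_n$, the reduced boundary $\partial^* F_n \cap H$ must therefore contain a rectifiable separator in $\overline H$ between the two arcs of $\partial H$ carrying these different trace values (otherwise a jump across $\partial \Omega$ of controlled size would already exhaust the perimeter bound).

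Finally, I would concatenate this separator with short arcs in $\overline H$ of length $\lesssim \rho_n$ joining $a_n$ to $\partial E_n^+ \cap \partial H$ and $b_n$ to $\partial E_n^- \cap \partial H$, obtaining a curve in $\overline H$ from $a_n$ to $b_n$ of total length at most $P(F_n, \R^2) + O(\rho_n) \lesssim \|T\|\rho_n$. Comparing with $d_H(a_n, b_n) > n r_n$ forces $n \lesssim \|T\|$, the required contradiction for $n$ large. The main obstacle is the very first step: extracting the quantitative bottleneck geometry from the qualitative non-quasiconvexity. In \cite{KMS} the simply connected hypothesis makes the complement essentially a single topological piece, which simplifies this extraction; here each hole has to be isolated by a careful scale selection $\rho_n \asymp r_n$ that decouples $H$ from the other components of $\R^2 \setminus \overline\Omega$. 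Once the bottleneck data is in place, the extension/coarea step and the separator-to-curve conversion are standard, following \cite{KMS, GBR2021}.
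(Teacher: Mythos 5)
The paper does not reprove this lemma: it points to \cite[Theorem 1.1]{KMS} for the simply connected case and to \cite[Lemma 5.2]{GBR2021} for the general one, and those arguments are \emph{direct}, not by contradiction. That difference matters here, because the one step you yourself flag as unresolved --- extracting a quantitative ``bottleneck configuration'' $(z_n,\rho_n,E_n^{\pm})$ from the mere failure of $c$-quasiconvexity for every $c$ --- is a genuine gap, and it is precisely the step the direct argument never needs. In the cited proofs one fixes two points $x,y$ in a hole $H$, writes $[x,y]\cap\Omega$ as a countable union of open subintervals $(a_j,b_j)$ with endpoints on $\partial\Omega$, and repairs each subinterval separately: one tests the extension operator on the characteristic function of (one side of) $\Omega$ near $(a_j,b_j)$, uses the coarea formula to obtain a set of finite perimeter $F_j$ agreeing with it in $\Omega$ and satisfying $P(F_j,\R^2)\lesssim \|T\|\,|a_j-b_j|$, and then uses the decomposition of $\partial^M F_j$ into rectifiable Jordan curves (Theorem \ref{thm:planardecomposition}) to find an arc in $\R^2\setminus\Omega$ joining points arbitrarily close to $a_j$ and $b_j$ of length $\lesssim \|T\|\,|a_j-b_j|$. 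Summing over $j$ and concatenating with $[x,y]\setminus\Omega$ gives the desired curve, with constant depending only on $\|T\|$ and $\diam\Omega$; no extremal or limiting configuration is ever extracted, so the difficulty you defer simply does not arise.

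Two further points. First, your separator step is under-justified as written: asserting that the $BV$-trace equals $1$ on one arc and $0$ on another and that therefore $\partial^* F_n\cap \overline H$ contains a rectifiable separator is the conclusion you want, not an argument; traces on $\partial\Omega$ need not even be well defined for a general $BV$-extension domain. The clean mechanism is again the Jordan-curve decomposition of $\partial^M F_n$ already quoted in the paper, from which one selects a curve enclosing one side but not the other and takes its sub-arc lying outside $\Omega$. Second, the genuinely new issue in the non--simply connected case is not scale selection but connectivity: one must check that the repaired curve stays inside the single hole $H$ rather than entering $\overline\Omega$ or a different complementary component; this is the ``minor modification'' carried out in \cite[Lemma 5.2]{GBR2021}. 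I would recommend abandoning the contradiction framing and writing the direct argument; your remaining ingredients (characteristic-function test, extension operator, coarea, perimeter bound) are the right ones and survive unchanged.
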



The invariance of quasiconvexity of the holes is easier to use for the boundaries of sets of finite perimeter, rather than for $BV$-functions. The passage from $BV$-functions to sets of finite perimeter is provided by Lemma \ref{lma:BV-Per}, which is a combination of the works \cite{BM,KMS}. Before stating it we need to recall some definitions. 

A Lebesgue measurable subset $E\subset \R^n$ has finite perimeter in $\Omega$ if $\chi_E\in BV(\Omega)$, where $\chi_E$ denotes the characteristic function. We set $P(E,\Omega)=\|D \chi_F\|(\Omega)$ and call it the perimeter of $E$ in $\Omega$. We will say that $\Omega$ has the extension property for sets of finite perimeter if there exists $C>0$ so that for every set $E\subset \Omega$ of finite perimeter in $\Omega$ one may find $\widetilde E\subset \R^n$ of finite perimeter in $\R^n$ such that $\widetilde E\cap \Omega=E$, modulo a measure zero set, and $P(\widetilde E,\R^n)\leq CP(E,\Omega)$.

\begin{lemma}\label{lma:BV-Per}
Let $\Omega\subset\R^n$ be a bounded domain. Then the following are equivalent:
\begin{enumerate}
    \item $\Omega$ is a $BV$-extension domain.
    \item $\Omega$ has the extension property for sets of finite perimeter.
 \end{enumerate}
\end{lemma}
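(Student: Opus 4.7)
The plan is to prove both implications through the standard link between a $BV$ function and its level sets $\{u > t\}$, which are sets of finite perimeter, provided by the coarea formula.

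For (1) $\Rightarrow$ (2), I would take $E \subset \Omega$ of finite perimeter, apply the $BV$-extension operator $T$ to $\chi_E$, and truncate the result to $[0,1]$, obtaining $v \in BV(\R^n)$ with $v = \chi_E$ a.e.\ on $\Omega$ and $\|Dv\|(\R^n) \leq C\|\chi_E\|_{BV(\Omega)}$. The coarea formula
\[
\|Dv\|(\R^n) = \int_0^1 P(\{v > t\}, \R^n)\, dt,
\]
combined with a mean-value argument, yields some $t^* \in (1/4, 3/4)$ with $P(\{v > t^*\}, \R^n) \leq 2\|Dv\|(\R^n)$. I would then set $\widetilde E := \{v > t^*\}$; since $v = \chi_E$ on $\Omega$, we have $\widetilde E \cap \Omega = E$ modulo a null set. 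To reduce the resulting bound from one involving both $|E|$ and $P(E,\Omega)$ to the clean bound $P(\widetilde E, \R^n) \leq C\,P(E,\Omega)$, I would invoke the relative isoperimetric inequality in $\Omega$ (itself a direct consequence of the $BV$-extension property applied to $\chi_E$), possibly after replacing $E$ by $\Omega \setminus E$ when $|E| \geq |\Omega|/2$, so that $\min\{|E|, |\Omega| - |E|\}$ is dominated by a power of $P(E,\Omega)$.

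For (2) $\Rightarrow$ (1), given $u \in BV(\Omega)$, I would split $u = u^+ - u^-$ and treat each nonnegative piece separately. For $u \geq 0$, the layer-cake identity reads $u(x) = \int_0^\infty \chi_{E_t}(x)\, dt$ with $E_t := \{u > t\}$, and the coarea formula gives $\int_0^\infty P(E_t, \Omega)\, dt = \|Du\|(\Omega)$. Using (2), I would select, jointly measurably in $(x,t)$, extensions $\widetilde E_t \subset \R^n$ of finite perimeter with $\widetilde E_t \cap \Omega = E_t$ and $P(\widetilde E_t, \R^n) \leq C\,P(E_t, \Omega)$, and then set
\[
\widetilde u(x) := \int_0^\infty \chi_{\widetilde E_t}(x)\, dt.
\]
Fubini gives $\widetilde u = u$ a.e.\ on $\Omega$, and the subadditivity of total variation under integration yields $\|D\widetilde u\|(\R^n) \leq \int_0^\infty P(\widetilde E_t, \R^n)\, dt \leq C\|Du\|(\Omega)$. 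The $L^1$-bound for $\widetilde u$ is then obtained by multiplying by a smooth cutoff supported in a fixed ball $B \supset \Omega$ equal to $1$ on $\Omega$ and applying the Poincar\'e inequality on $B$ to absorb the mass of $\widetilde u$ outside $\Omega$ into $\|u\|_{L^1(\Omega)} + \|D\widetilde u\|(\R^n)$.

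The main technical obstacle I anticipate is the joint measurability of $t \mapsto \widetilde E_t$ in the direction (2) $\Rightarrow$ (1): property (2) only provides, for each fixed $E_t$, existence of \emph{some} extension, with no structural selection principle. The cleanest way around this is to approximate $u$ by dyadic simple functions $u_k = 2^{-k} \sum_{j \geq 1} \chi_{\{u > j\,2^{-k}\}}$, which involve only countably many level sets per scale, extend each such level set individually, assemble $\widetilde u_k$ as the corresponding countable sum of characteristic functions of extensions, and pass to the limit using $u_k \to u$ in $BV(\Omega)$ together with the lower semicontinuity of the total variation.
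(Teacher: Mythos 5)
The paper does not actually prove this lemma: it is quoted as ``a combination of the works \cite{BM,KMS}'' and used as a black box, so there is no in-paper argument to compare against. Your proposal reconstructs what is essentially the standard proof from those references --- coarea formula in both directions, selection of a good level set for (1)$\Rightarrow$(2), and layer-cake reassembly for (2)$\Rightarrow$(1) --- so the route is the right one. Your identification of the measurability of $t\mapsto \widetilde E_t$ as the main obstacle, and the dyadic workaround, are also correct; just note that since $t\mapsto P(E_t,\Omega)$ is only measurable, the dyadic offsets must be chosen generically (via Fubini) so that the Riemann sums actually converge to the coarea integral.

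Two steps are glossed over more than they can bear. First, in (1)$\Rightarrow$(2) the relative isoperimetric inequality $\min\{|E|,|\Omega\setminus E|\}\le C\,P(E,\Omega)$ is not a ``direct consequence'' of the extension property applied to $\chi_E$: the coarea/good-level argument combined with the global isoperimetric inequality only yields $\min\{|E|,|\Omega\setminus E|\}^{(n-1)/n}\le C\left(|E|+P(E,\Omega)\right)$, and the term $|E|$ can be absorbed only when $|E|$ is small; for $|E|$ bounded away from $0$ and from $|\Omega|$ one needs a positive lower bound on $P(E,\Omega)$, which requires the compactness of the embedding $BV(\Omega)\hookrightarrow L^1(\Omega)$ (valid for extension domains) together with the connectedness of $\Omega$. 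This is standard but it is a genuine extra step, not a one-liner. Second, in (2)$\Rightarrow$(1) the function $\widetilde u=\int_0^\infty\chi_{\widetilde E_t}\,dt$ is not a priori locally integrable --- indeed your own complementation trick in the other direction shows that the sets produced by property (2) may have infinite measure --- so the Poincar\'e inequality on $B$ cannot be applied to $\widetilde u$ directly. One should instead apply it to the bounded truncations $\widetilde u_N=\int_0^N\chi_{\widetilde E_t}\,dt$ (or to the dyadic partial sums), use that $\widetilde u_N\le u$ on $\Omega$ to control the mean value over $B$, and then pass to the limit by monotone convergence and lower semicontinuity of the total variation. With these repairs the argument closes.
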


One more tool that we use is the decomposition of planar sets of finite perimeter into Jordan domains.

We say that $\Gamma\subset\R^2$ is a Jordan curve if $\Gamma=\gamma([a,b])$ for some $a,b\in \R$, $a<b$, and some continuous map $\gamma$, injective on $[a,b)$ and such that $\gamma(a)=\gamma(b)$. The Jordan curve theorem assures that $\Gamma$ splits $\R^2\setminus \Gamma$ into exactly two connected components, a bounded one and an unbounded one that we denote by $\text{int} (\Gamma)$ and $\text{ext} (\Gamma)$ respectively. A set $U$ whose boundary $\partial U$ is a Jordan curve is called a Jordan domain.

For technical reasons we also add to the class of Jordan curves the formal ``Jordan'' curves $J_0$  and $J_{\infty}$, whose interiors are $\R^2$ and the empty set respectively and for which we set $\mathcal{H}^{1}(J_{0})=\mathcal{H}^{1}(J_{\infty})=0$.

For a measurable set $E\subset\rn$, we denote by $\partial^M E$ its essential boundary, which consists of points such that both $E$ and $\rn\setminus E$ have positive upper density on them, that is
$$\partial^M E=\left\lbrace x\in\R^n\,:\, \limsup_{r\searrow 0}\frac{|E\cap B(x,r)|}{|B(x,r)|}>0 \;\text{and} \; \limsup_{r\searrow 0}\frac{|(\R^n\setminus E)\cap B(x,r)|}{|B(x,r)|}>0\right\rbrace .$$
A set $E \subset \mathbb R^n$ of finite perimeter is called decomposable, if there exist sets $A,B \subset \mathbb R^n$ of positive Lebesgue measure such that  $E = A \cup B$, $A \cap B = \emptyset$, and $P(E,\mathbb R^n) = P(A,\mathbb R^n) + P(B,\mathbb R^n)$. A set is called indecomposable, if it is not decomposable.

The following was proven in \cite[Corollary 1]{ACMM2001}. 

\begin{theorem} \label{thm:planardecomposition}
Let $E \subset \mathbb R^2$ have finite perimeter. Then, there exists a unique decomposition of $\partial^ME$ into rectifiable Jordan curves $\{C_i^+, C_k^-\,:\,i,k \in \mathbb N\}$, modulo $\mathcal H^1$-measure zero sets,
such that
\begin{enumerate}
    \item Given $\text{int}(C_i^+)$, $\text{int}(C_k^+)$, $i \ne k$, they are either disjoint or one is contained
in the other; given $\text{int}(C_i^-)$, $\text{int}(C_k^-)$, $i \ne k$, they are either disjoint or one is
contained in the other. Each $\text{int}(C_i^-)$ is contained in one of the $\text{int}(C_k^+)$.
    \item $P(E,\R^2) = \sum_{i}\mathcal H^1(C_i^+) + \sum_k \mathcal H^1(C_k^-)$.
    \item If $\text{int}(C_i^+) \subset \text{int}(C_j^+)$, $i \ne j$, then there is some rectifiable Jordan curve  $C_k^-$ such that $\text{int}(C_i^+)\subset \text{int}(C_k^-) \subset \text{int}(C_j^+)$. Similarly, if $\text{int}(C_i^-) \subset \text{int}(C_j^-)$, $i \ne j$, then there is some rectifiable Jordan curve  $C_k^+$ such that $\text{int}(C_i^-)\subset \text{int}(C_k^+) \subset \text{int}(C_j^-)$.
    \item Setting $L_j =\{i \,:\, \text{int}(C_i^-)\subset \text{int}(C_j^+)\}$ the sets $Y_j = \text{int}(C_j^+) \setminus \bigcup_{i \in L_j}\text{int}(C_i^-)$ are pairwise disjoint , indecomposable 
    and $E = \bigcup_j Y_j$.
\end{enumerate}
\end{theorem}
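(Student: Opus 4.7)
The plan is to derive the statement in three stages: decompose $E$ into indecomposable components, describe the essential boundary of each such component as one ``outer'' rectifiable Jordan curve together with countably many ``inner'' ones, and then assemble these across components while verifying uniqueness and the nesting properties.

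For the first stage I would invoke the general decomposition of planar sets of finite perimeter into indecomposable pieces (itself a separate result from the same circle of ideas in \cite{ACMM2001}): up to Lebesgue-null sets and reordering, there is a unique countable family $\{Y_j\}$ of pairwise disjoint indecomposable subsets of positive measure with $E=\bigcup_j Y_j$ and $P(E,\R^2)=\sum_j P(Y_j,\R^2)$. This already yields item (4); what remains is to describe $\partial^MY_j$ for each $j$ as the required union of Jordan curves.

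For the second stage, fix an indecomposable bounded $Y$ of finite perimeter in $\R^2$. By De Giorgi's structure theorem, $\partial^MY$ coincides $\HH^1$-a.e.\ with the reduced boundary and is $1$-rectifiable with $\HH^1(\partial^MY)=P(Y,\R^2)<\fz$. I would then analyze the topological components of $\R^2\setminus\overline{Y^{(1)}}$, where $Y^{(1)}$ denotes the density-one points of $Y$: exactly one is unbounded and the others are the ``holes''. The main geometric claim is that the topological boundary of each such component coincides $\HH^1$-a.e.\ with a single rectifiable Jordan curve, which produces $C^+$ from the unbounded component and $\{C_k^-\}_{k\in L}$ from the bounded ones. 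The Jordan-curve property would be extracted by a Carath\'eodory prime-end type argument: a simply connected bounded planar region whose topological boundary is a continuum of finite $\HH^1$-measure is a Jordan domain, because finite length rules out the pathologies that would otherwise obstruct a Jordan parameterization. Indecomposability of $Y$ is then used to force the identity $Y=\mathrm{int}(C^+)\setminus\bigcup_{k\in L}\mathrm{int}(C_k^-)$ modulo null sets, since any other measurable subset of this annular region of positive measure and positive measure complement would split $Y$ into two pieces whose perimeters sum to $P(Y,\R^2)$, contradicting indecomposability.

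For the third stage, I would collect the $C_j^+$ and $C_k^-$ across all indecomposable components and verify items (1), (2), (3). Item (1) is planar topology applied to the annular pieces $Y_j$: two distinct interiors $\mathrm{int}(C_i^+)$, $\mathrm{int}(C_j^+)$ must be either disjoint or nested because $Y_i\cap Y_j=\emptyset$, and an analogous statement holds for the $C^-$ family. Item (3) follows by iterating this observation: if $\mathrm{int}(C_i^+)\subsetneq\mathrm{int}(C_j^+)$, then $Y_i$ must lie inside one of the holes $\mathrm{int}(C_k^-)$ of the $Y_j$ component, and symmetrically for nested $C^-$'s. Item (2) is the combination of the per-component identity $P(Y_j,\R^2)=\HH^1(C_j^+)+\sum_{k\in L_j}\HH^1(C_k^-)$ from stage two with the perimeter additivity from stage one. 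Uniqueness is inherited from the uniqueness of the indecomposable decomposition together with the uniqueness, modulo $\HH^1$-null sets, of the Jordan-curve decomposition of each $\partial^MY_j$. The main obstacle is stage two: upgrading the purely measure-theoretic $1$-rectifiability of $\partial^MY$ to the topological statement that it is a disjoint union of Jordan curves is intrinsically planar, relies on the Jordan curve theorem and a Carath\'eodory-type boundary analysis of simply connected planar domains, and constitutes the technical heart of the argument in \cite{ACMM2001}.
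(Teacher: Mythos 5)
First, note that the paper gives no proof of this statement at all: it is quoted verbatim from \cite[Corollary 1]{ACMM2001}, so there is no in-paper argument to compare against. Your three-stage outline does follow the broad strategy of Ambrosio--Caselles--Masnou--Morel (decompose into indecomposable components, describe each component's essential boundary by one outer and countably many inner rectifiable Jordan curves, then assemble and check nesting and uniqueness), so the architecture is sound.

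There is, however, a genuine gap at the step you yourself identify as the technical heart. The lemma you propose to use --- that a simply connected bounded planar region whose topological boundary is a continuum of finite $\mathcal{H}^1$-measure is a Jordan domain --- is false: the slit disk $\mathbb{D}\setminus[0,1)$ is simply connected and bounded, its topological boundary $\partial\mathbb{D}\cup[0,1]$ is a continuum of finite length, yet it is not a Jordan domain. The reason this example does not contradict the theorem is that the slit disk is equivalent, modulo Lebesgue-null sets, to the full disk, whose \emph{essential} boundary is just the circle; so any correct argument must work with $\partial^M$ and the measure-theoretic notion of indecomposability throughout, and cannot be reduced to topological boundaries of components of $\mathbb{R}^2\setminus\overline{Y^{(1)}}$ (the closure operation can create exactly the slit-type pathologies the claim ignores). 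In \cite{ACMM2001} this step is handled differently: one passes to the saturation of an indecomposable set, proves that the essential boundary of the resulting ``simple'' set is a connected set of finite $\mathcal{H}^1$-measure, invokes the classical fact that such a continuum is the image of a closed Lipschitz curve, and then uses simplicity (not a prime-end argument) to show the parametrization can be taken injective. Without replacing your Carath\'eodory-type lemma by an argument of this kind, stage two --- and hence the whole proof --- does not close. Two smaller points: your components $Y_j$ need not be bounded a priori (only of finite measure and perimeter), which is why the formal curves $J_0$ and $J_\infty$ appear in the statement; and the identification $Y=\mathrm{int}(C^+)\setminus\bigcup_k\mathrm{int}(C_k^-)$ requires showing that the holes of $Y$ are themselves indecomposable and saturated, which is an additional lemma rather than an immediate consequence of indecomposability of $Y$.
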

Since sets of finite perimeter are defined via the total variation seminorm of $BV$-functions, they are understood modulo $2$-dimensional measure zero sets. In particular, the last equality in (4) of Theorem \ref{thm:planardecomposition} is modulo measure zero sets. To make precise the change of representatives, we use below the notation $A \Delta B := (A\setminus B)\cup (B \setminus A)$ for the symmetric difference between subsets $A, B \subset \mathbb R^2$.

With the auxiliary tools now recalled, we are ready to prove the main result of this paper.

\begin{proof}[Proof of Theorem \ref{thm:biLip}]
  Suppose $f\colon \Omega \to \Omega'$ is $L$-bi-Lipschitz.
  First notice that $f$ extends to $\overline{\Omega} \to \overline{\Omega'}$ as a bi-Lipschitz map.
 Let $\Omega_i'$ be a connected component of  $\mathbb R^2 \setminus \overline{\Omega'}$ and $\Omega_i$ the connected component of $\mathbb R^2 \setminus \overline\Omega$ for which $f(\partial\Omega_i) = \partial\Omega_i'$. Then by Lemma \ref{lem:quasi.comp.domains} there exists a constant $c>0$ so that each hole ${\Omega_i}$ is $c$-quasiconvex. Therefore, by Theorem \ref{thm:Vaisala} also each ${\Omega_i'}$ is $c'$-quasiconvex, where the quasiconvexity constant $c'$ does not depend on $i$. Obviously, each $\overline{\Omega_i'}$ is also $c'$-quasiconvex.
 
 Suppose that $\Omega$ is a bounded $BV$-extension domain. By Lemma \ref{lma:BV-Per}, we only need to prove that having the extension property for sets of finite perimeter is invariant under the bi-Lipschitz map $f$.
 Let $E' \subset \Omega'$ be a set of finite perimeter. 
 Then
 $E = f^{-1}(E')$ is also a set of finite perimeter with
 \begin{equation}\label{eq:perimeterchange}
 P(E,\Omega) \le L P(E',\Omega').
 \end{equation}

 Let $\widetilde E$ be the perimeter extension of $E$ to the whole $\mathbb R^2$ with 
 \begin{equation}\label{eq:Omegaextension}
  P(\widetilde E, \rr^2)\leq CP(E, \boz).
 \end{equation}
 By Theorem \ref{thm:planardecomposition}, there exists a class of Jordan curves $\{\widetilde {C}_i^+,\widetilde{C}_k^-\}_{i, k\in\mathbb N}$ with 
 \begin{equation*}
 \mathcal H^2(F) = 0
 \end{equation*}
 for the symmetric difference
 \[
 F := \widetilde E \Delta \left(\bigcup_i \text{int}(\widetilde{C}_i^+) \setminus \bigcup_k 
 \text{int}(\widetilde{C}_k^-)\right),
 \]
 and
 \begin{equation}\label{eq:length}
 P(\widetilde E, \mathbb R^2)=\sum_i\mathcal H^1(\widetilde{C}_i^+)+\sum_k\mathcal H^1(\widetilde{C}^-_k)    
 \end{equation}

 Take 
 $J \in \{\text{int}(\widetilde{C}_i^+)\}_i\cup \{\text{int}(\widetilde {C}_k^-)\}_k.$  We will extend each $J' = f(J \cap \Omega) \subset \Omega'$ to the whole $\mathbb R^2$ in order to define the final extension set $\widetilde E'$ of $E'$. Consider a homeomorphism $\gamma\colon \mathbb S^1 \to \partial J$, given by the fact that $J$ is a Jordan domain. The set $\gamma\setminus \overline{\Omega}$ consists of (at most) countably many open arcs $\gamma_i$ with endpoints $x_i,y_i \in \partial\Omega_{j(i)}$ for some $j(i)$. Observe that since $|x_i-y_i| \le \mathcal H^1(\gamma_i)$,
 we always have
 \begin{equation}\label{eq:onecurve}
 \mathcal{H}^1(\gamma\cap\overline{\Omega})+ \sum_i |x_i- y_i|\leq  \mathcal{H}^1(\gamma\cap\overline{\Omega})+\mathcal{H}^1(\gamma\setminus \overline\Omega)\leq \mathcal{H}^1(\gamma).
 \end{equation}
  Let us write $I_i=\gamma^{-1}(\gamma_i)$ for each $i$ 
  and use the  $c'$-quasiconvexity of  $\overline{\Omega_{j(i)}'}$ to find a curve $\gamma_i' \subset \overline{\Omega_{j(i)}'}$ joining $f(x_i)$ to $f(y_i)$ with
 \begin{equation}\label{eq:gammaiqc}
 \ell(\gamma_i') \le c' |f(x_i)- f(y_i)|.
 \end{equation}
 For convenience, we use the parametrization $\gamma_i' \colon I_i \to \mathbb R^2$ so that $\gamma_i'^{-1}(x_i) = \gamma^{-1}(x_i)$
 and $\gamma_i'^{-1}(y_i) = \gamma^{-1}(y_i)$.
 The combination of $f(\gamma \cap \overline{\Omega})$
 with the curves $\gamma_i'$ results 
 in a continuous curve $\gamma' \colon \mathbb S^1 \to \mathbb R^2$
 defined as
 \[
 \gamma'(t) = \begin{cases}
  f(\gamma(t)), &\text{if }\gamma(t) \in \overline{\Omega},\\
  \gamma_i'(t), & \text{if }t \in I_i.
\end{cases}
 \]
  Now, we define the extension domain $\widetilde J'$ of $J'=f(\Omega\cap J)$ as the union of all the connected components of $\mathbb R^2 \setminus \gamma'$ that intersect $J'$. 
 Then, combining \eqref{eq:gammaiqc} with \eqref{eq:onecurve} and the fact that $f$ is $L$-bi-Lipschitz, gives
 \begin{equation}\label{eq:pieces}
 \begin{split}
 P(\widetilde J',\mathbb R^2) & \le \mathcal{H}^1(\gamma') \\
  & = \mathcal H^1(f(\gamma \cap \overline{\Omega}))
   + \sum_i\mathcal H^1(\gamma_i') \\
 & \le \mathcal H^1(f(\gamma \cap \overline{\Omega}))
   + \sum_i c'|f(x_i)-f(y_i)|\\
   & \le L \mathcal H^1(\gamma \cap \overline{\Omega}) + 
   c'L \sum_i\mathcal|x_i-y_i| \\
   &\le  c'L \mathcal H^1(\gamma) \\
   &=c'L P(J,\mathbb R^2).
  \end{split}
  \end{equation}
 
We finally set our extension of $E'$ to be
$$ \widetilde E'=\bigcup_{J\in \{\text{int}(\widetilde{C}_i^+)\}_i}  \widetilde J' \setminus\bigcup_{J\in \{\text{int}(\widetilde {C}_k^-)\}_k} \widetilde J'.$$
  Since the decomposition of $\widetilde E$ was left unchanged inside $\Omega$, we have that 
  \[
  (\widetilde E' \cap \Omega') \Delta E' = f(F\cap \Omega)
  \]
  has zero $2$-dimensional measure, since $F$ is measure-zero and $f$ is bi-Lipschitz. Hence, $\widetilde{E'}$ is indeed an extension of $E'$
   
 Now, summing the estimate \eqref{eq:pieces} over all the Jordan curves and using \eqref{eq:length}, \eqref{eq:Omegaextension}, and
 \eqref{eq:perimeterchange}, we get
 \begin{align*}
  P(\widetilde E',\R^2)&\leq \sum_{J\in \{\text{int}(\widetilde{C}_i^+),\text{int}(\widetilde {C}_k^-)\}_{i,k}} P(\widetilde{J'},\R^2)\\
  &\le c'L \sum_{J\in \{\text{int}(\widetilde{C}_i^+),\text{int}(\widetilde {C}_k^-)\}_{i,k}} P(J,\R^2)\\
  &= c'L P(\widetilde E,\mathbb R^2) \\
  &\le Cc'L P(E,\Omega)\\
  &\le Cc'L^2 P(E',\Omega').
 \end{align*}
This shows that $\widetilde E'$ is a perimeter extension of $E$ as required. Thus, by Lemma \ref{lma:BV-Per} we conclude that the domain $\Omega'$ is a $BV$-extension domain.
\end{proof}

\end{document}